\newtheorem{theorem}{Theorem}[section]
\newtheorem{definition}[theorem]{Definition}
\newtheorem{example}[theorem]{Example}
\newtheorem{lemma}[theorem]{Lemma}
\numberwithin{equation}{section}
\begin{document}

\title{Four-point boundary value problems for a coupled system of fractional differential equations with $\psi$-Caputo fractional derivatives}
\author{Mohamed I. Abbas\thanks{miabbas77@gmail.com}\\ Department of Mathematics and Computer Science, Faculty of Science, Alexandria University, Alexandria 21511, Egypt}
\date{}

\maketitle
\makeatletter
\renewcommand\@makefnmark%
{\mbox{\textsuperscript{\normalfont\@thefnmark)}}}
\makeatother

\begin{abstract}
In this paper, we focus on  the existence and uniqueness of  solutions of boundary value problems for a coupled system of fractional differential equations with four-point boundary conditions involving $\psi$-Caputo fractional derivatives. Our investigation is based on Leray-Schauder alternative and  Banach's fixed point theorem. Two examples are presented to illustrate the applicability of the results developed.
\end{abstract}
\textbf{Mathematics Subject Classification:} 34A08, 34A12, 34B15.\\
\textbf{Keywords:} $\psi$-Caputo fractional derivative, Coupled system, four-point boundary conditions.

\section{Introduction}
Fractional differential equations have been studied extensively in the literature because of their applications in various fields of engineering and science, for example, see the monographs \cite{Kilbas,Lakshmikantham,Miller,Podlubny,Samko}. The study of qualitative properties of solutions for different fractional differential equations such as existence, uniqueness, stability, continuous dependence, etc., has become an important area of investigation in recent years. For instance, we mention here few recent works by Abbas, M.I. \cite{Abbas0} -\cite{Abbas5},  also see the references cited therein.\\

 The topic of boundary value problems for fractional differential equations has also received considerable attention during recent decades.  At the same time, it has become widely seen that coupled boundary value problems have gained an importance role in view of their great utility in handling of applied nature such as: ecological models \cite{Javidi}, anomalous diffusion \cite{diffusion}, systems of nonlocal thermoelasticity \cite{thermo},  the heat equations \cite{heat}, etc. For some recent theoretical results on the
topic, we refer the reader to a series of papers \cite{Agarwal,Ahmad18,Ahmad16,RaoC,Zou} and the references cited therein.\\

Very recently, Almeida \cite{Almeida1}, introduced a Caputo type fractional derivative with respect to another function called by $\psi$-Caputo fractional operator. He also studied some properties like the semigroup law, a relationship between the fractional derivative and the fractional integral, Taylor's theorem, Fermat's theorem, etc. For recent works involving $\psi$-Caputo fractional operator, we refer the reader to a series of papers \cite{Abdo,Almeida,Belmor,Derbazi,Vivek}.\\

In this paper, we establish the existence and uniqueness results of the following coupled system of fractional differential equations:
\begin{equation}\label{main}
\begin{cases}
{}^{C}D_{0^{+}}^{\alpha;\psi}x(t)=f(t,x(t),y(t)),~~t\in [0,1],~1<\alpha<2\\
{}^{C}D_{0^{+}}^{\beta;\psi}y(t)=g(t,x(t),y(t)),~~t\in [0,1],~1<\beta<2\\
x(0)=y(0)=0,\\
x(1)=\lambda x(\eta),~y(1)=\mu y(\xi),~0<\eta,\xi<1,~\lambda,\mu>0\\
\end{cases}
\end{equation}
where ${}^{C}D_{0^{+}}^{\alpha;\psi},{}^{C}D_{0^{+}}^{\beta;\psi}$ denote the $\psi$-Caputo fractional derivatives of order $\alpha,\beta$ and $f,g:[0,1]\times\mathbb{R}^{2}\to \mathbb{R}$ are given continuous functions.\\

This paper is organized as follows. In Section 2, we give some definitions and lemmas used in this paper. Existence and uniqueness results for (\ref{main}) are derived in Sections 3. Two examples are provided in Section 4.

\section{Preliminary and Lemmas}
In this section, we recall definitions, properties and lemmas of the new $\psi$-Caputo fractional derivative. 

\begin{definition}(\cite{Almeida1,Almeida})
For $\alpha>0$, the left-sided $\psi$-Riemann-Liouville fractional integral of order $\alpha$ for an integrable function $\sigma:[a,b]\to\mathbb{R}$ with respect to another function $\psi:[a,b]\to\mathbb{R}$ that is an increasing differentiable function such that $\psi^{'}(t)\not=0$, for all $t\in [a,b]$ is defined by
\begin{equation}
I_{a^{+}}^{\alpha;\psi}\sigma(t)=\frac{1}{\Gamma(\alpha)}\int_{a}^{t}\psi^{'}(s)(\psi(t)-\psi(s))^{\alpha-1}\sigma(s)ds,
\end{equation}
where $\Gamma$ is the Euler Gamma function.
\end{definition}

\begin{definition}(\cite{Almeida1})
Let $n\in\mathbb{N}$ and let $\psi,\sigma\in C^{n}([a,b],\mathbb{R})$ be two functions such that $\psi$ is increasing and $\psi^{'}(t)\not=0$, for all $t\in [a,b]$. The left-sided $\psi$-Riemann-Liouville fractional derivative of a function $\sigma$ of order $\alpha$ is defined by
\begin{eqnarray*}
D_{a^{+}}^{\alpha;\psi}\sigma(t)&=&\bigg(\frac{1}{\psi^{'}(t)}\frac{d}{dt}\bigg)^{n}I_{a^{+}}^{n-\alpha;\psi}\sigma(t)\\
&=&\frac{1}{\Gamma(n-\alpha)}\bigg(\frac{1}{\psi^{'}(t)}\frac{d}{dt}\bigg)^{n}\int_{a}^{t}\psi^{'}(s)(\psi(t)-\psi(s))^{n-\alpha-1}\sigma(s)ds,
\end{eqnarray*}
where $n=[\alpha]+1$ and $[\alpha]$ denotes the integer part of the real number $\alpha$.
\end{definition}

\begin{definition}(\cite{Almeida1})
Let $n-1<\alpha<n,~n\in\mathbb{N}$ and let $\psi,\sigma\in C^{n}([a,b],\mathbb{R})$ be two functions such that $\psi$ is increasing and $\psi^{'}(t)\not=0$, for all $t\in [a,b]$. The left-sided $\psi$-Caputo fractional derivative of a function $\sigma$ of order $\alpha$ is defined by
\begin{eqnarray*}
{}^{C}D_{a^{+}}^{\alpha;\psi}\sigma(t)&=&D_{a^{+}}^{\alpha;\psi}\bigg[\sigma(t)-\sum_{k=0}^{n-1}\frac{\sigma_{\psi}^{[k]}(a)}{k!}(\psi(t)-\psi(a))^{k}\bigg],
\end{eqnarray*}
where $\sigma_{\psi}^{[k]}(t)=\bigg(\frac{1}{\psi^{'}(t)}\frac{d}{dt}\bigg)^{k}\sigma(t)$ and $n=[\alpha]+1$ for $\alpha\not\in\mathbb{N}$, $n=\alpha$ for $\alpha\in\mathbb{N}$. Further, if $\sigma\in C^{n}([a,b],\mathbb{R})$ and $\alpha\not\in\mathbb{N}$,then

\begin{eqnarray*}
{}^{C}D_{a^{+}}^{\alpha;\psi}\sigma(t)&=&I_{a^{+}}^{n-\alpha;\psi}\bigg(\frac{1}{\psi^{'}(t)}\frac{d}{dt}\bigg)^{n}\sigma(t)\\
&=&\frac{1}{\Gamma(n-\alpha)}\int_{a}^{t}\psi^{'}(s)(\psi(t)-\psi(s))^{n-\alpha-1}\sigma_{\psi}^{[n]}(s)ds.
\end{eqnarray*}
Thus, if $\alpha=n\in\mathbb{N}$, then ${}^{C}D_{a^{+}}^{\alpha;\psi}\sigma(t)=\sigma_{\psi}^{[n]}(t)$.

\end{definition}

\begin{lemma}(\cite{Almeida})\label{lemID}
Let $\alpha>0$. The following holds:\\
If $\sigma\in C([a,b],\mathbb{R})$, then $${}^{C}D_{a^{+}}^{\alpha;\psi}I_{a^{+}}^{\alpha;\psi}\sigma(t)=\sigma(t),~~t\in[a,b].$$
If $\sigma\in C^{n}([a,b],\mathbb{R}),~n-1<\alpha<n$, then
$$I_{a^{+}}^{\alpha;\psi}~{}^{C}D_{a^{+}}^{\alpha;\psi}\sigma(t)=\sigma(t)-\sum_{k=0}^{n-1}c_{k}(\psi(t)-\psi(a))^{k},~~t\in[a,b],$$
where $c_{k}=\frac{\sigma_{\psi}^{[k]}(a)}{k!}$.
\end{lemma}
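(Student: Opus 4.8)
The plan is to reduce both identities to the corresponding classical (that is, $\psi(t)=t$) Riemann--Liouville/Caputo statements by means of the substitution $u=\psi(s)$, which conjugates every $\psi$-operator into its ordinary counterpart on the interval $[\psi(a),\psi(b)]$. Concretely, to each $\sigma$ I would associate the transported function $\tilde\sigma:=\sigma\circ\psi^{-1}$, which is well defined since $\psi$ is an increasing differentiable map with $\psi'\neq0$, hence a bijection onto its image. The first thing to record is the regularity transfer: by the inverse function theorem $\psi^{-1}\in C^{n}$ (using $\psi\in C^{n}$ and $\psi'\neq0$), so $\sigma\in C^{n}\Rightarrow\tilde\sigma\in C^{n}$, and $\sigma\in C\Rightarrow\tilde\sigma\in C$; this is exactly the hypothesis needed to invoke the classical identities below.

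Next I would establish the two conjugation formulas. For the integral, the substitution $u=\psi(s)$, $du=\psi'(s)\,ds$ in the defining formula of $I_{a^{+}}^{\alpha;\psi}$ turns it into $\frac{1}{\Gamma(\alpha)}\int_{\psi(a)}^{\psi(t)}(\psi(t)-u)^{\alpha-1}\tilde\sigma(u)\,du$, i.e. $I_{a^{+}}^{\alpha;\psi}\sigma(t)=\bigl(I_{\psi(a)^{+}}^{\alpha}\tilde\sigma\bigr)(\psi(t))$. For the derivative operator the key observation is the chain-rule identity $\bigl(\tfrac{1}{\psi'(t)}\tfrac{d}{dt}\bigr)F(t)=\tilde F'(\psi(t))$ whenever $F(t)=\tilde F(\psi(t))$; iterating it yields $\sigma_{\psi}^{[k]}(t)=\tilde\sigma^{(k)}(\psi(t))$ for every $k$, and substituting this together with $u=\psi(s)$ into the integral representation of the $\psi$-Caputo derivative gives ${}^{C}D_{a^{+}}^{\alpha;\psi}\sigma(t)=\bigl({}^{C}D_{\psi(a)^{+}}^{\alpha}\tilde\sigma\bigr)(\psi(t))$. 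The induction establishing $\sigma_{\psi}^{[k]}(t)=\tilde\sigma^{(k)}(\psi(t))$ is the step I expect to require the most bookkeeping, and is the genuine technical core of the argument.

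With these two formulas in hand, both claims follow by composing and evaluating at $u=\psi(t)$. For the first, applying the conjugation formulas to $h:=I_{a^{+}}^{\alpha;\psi}\sigma$ gives ${}^{C}D_{a^{+}}^{\alpha;\psi}I_{a^{+}}^{\alpha;\psi}\sigma(t)=\bigl({}^{C}D_{\psi(a)^{+}}^{\alpha}I_{\psi(a)^{+}}^{\alpha}\tilde\sigma\bigr)(\psi(t))$, and the classical left-inverse property ${}^{C}D^{\alpha}I^{\alpha}\tilde\sigma=\tilde\sigma$ (valid for continuous $\tilde\sigma$) collapses this to $\tilde\sigma(\psi(t))=\sigma(t)$. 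For the second, the same manipulation gives $I_{a^{+}}^{\alpha;\psi}\,{}^{C}D_{a^{+}}^{\alpha;\psi}\sigma(t)=\bigl(I_{\psi(a)^{+}}^{\alpha}\,{}^{C}D_{\psi(a)^{+}}^{\alpha}\tilde\sigma\bigr)(\psi(t))$, and the classical identity $I^{\alpha}\,{}^{C}D^{\alpha}\tilde\sigma(u)=\tilde\sigma(u)-\sum_{k=0}^{n-1}\frac{\tilde\sigma^{(k)}(\psi(a))}{k!}(u-\psi(a))^{k}$, evaluated at $u=\psi(t)$, produces $\sigma(t)-\sum_{k=0}^{n-1}\frac{\tilde\sigma^{(k)}(\psi(a))}{k!}(\psi(t)-\psi(a))^{k}$. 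Finally I would identify the coefficients via $\tilde\sigma^{(k)}(\psi(a))=\sigma_{\psi}^{[k]}(a)$ (the $t=a$ case of the chain-rule identity), which reproduces exactly $c_{k}=\sigma_{\psi}^{[k]}(a)/k!$ and completes the proof.
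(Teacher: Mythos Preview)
Your argument is correct and is in fact the standard route to these identities: the change of variable $u=\psi(s)$ conjugates each $\psi$-operator into its classical counterpart, and the two claims then reduce to the well-known Caputo identities on $[\psi(a),\psi(b)]$. The only place that deserves a word of care is the first identity: your conjugation formula for ${}^{C}D_{a^{+}}^{\alpha;\psi}$ was derived under a $C^{n}$ hypothesis, whereas here it is applied to $h=I_{a^{+}}^{\alpha;\psi}\sigma$ with $\sigma$ merely continuous. The clean way around this is to note that $h_{\psi}^{[k]}(a)=0$ for $k=0,\dots,n-1$ (equivalently, $\widetilde h^{(k)}(\psi(a))=0$), so the $\psi$-Caputo derivative of $h$ coincides with its $\psi$-Riemann--Liouville derivative; the latter conjugates to the ordinary RL derivative without any smoothness issue, and then $D^{\alpha}I^{\alpha}\tilde\sigma=\tilde\sigma$ finishes the job. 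With that remark your proof is complete.

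As for comparison with the paper: the paper does \emph{not} prove this lemma at all. It is quoted verbatim from Almeida~\cite{Almeida} and used as a black box in the derivation of the integral representation (Lemma~3.1). So there is nothing to compare your approach against in this manuscript; your write-up actually supplies an argument where the paper has none.
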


\begin{lemma}(\cite{Almeida,Kilbas})\label{lemprop}
Let $t>a$, $\alpha\geq 0$ and $\beta>0$. Then
\begin{itemize}
  \item $I_{a^{+}}^{\alpha;\psi}(\psi(t)-\psi(a))^{\beta-1}=\frac{\Gamma(\beta)}{\Gamma(\beta+\alpha)}(\psi(t)-\psi(a))^{\beta+\alpha-1}$,
  \item ${}^{C}D_{a^{+}}^{\alpha;\psi}(\psi(t)-\psi(a))^{\beta-1}=\frac{\Gamma(\beta)}{\Gamma(\beta-\alpha)}(\psi(t)-\psi(a))^{\beta-\alpha-1}$,
  \item ${}^{C}D_{a^{+}}^{\alpha;\psi}(\psi(t)-\psi(a))^{k}=0,~\textnormal{for all}~k\in\{0,1,\cdots,n-1\},~n\in\mathbb{N}$.
  \end{itemize}
\end{lemma}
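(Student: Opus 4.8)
The plan is to establish the three identities directly from the defining integral in Definition 2.1, reducing everything to the Beta integral and to the elementary action of the operator $\left(\frac{1}{\psi'(t)}\frac{d}{dt}\right)^{n}$ on powers of $\psi(t)-\psi(a)$.

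First I would prove the integral identity. Writing out $I_{a^{+}}^{\alpha;\psi}(\psi(t)-\psi(a))^{\beta-1}=\frac{1}{\Gamma(\alpha)}\int_{a}^{t}\psi'(s)(\psi(t)-\psi(s))^{\alpha-1}(\psi(s)-\psi(a))^{\beta-1}\,ds$, I would introduce the change of variable $u=\frac{\psi(s)-\psi(a)}{\psi(t)-\psi(a)}$, which is legitimate because $\psi$ is increasing with $\psi'\neq 0$, so that $\psi'(s)\,ds=(\psi(t)-\psi(a))\,du$ and $\psi(t)-\psi(s)=(\psi(t)-\psi(a))(1-u)$. The integral then factors as $(\psi(t)-\psi(a))^{\alpha+\beta-1}$ times $\frac{1}{\Gamma(\alpha)}\int_{0}^{1}u^{\beta-1}(1-u)^{\alpha-1}\,du$; recognizing the last integral as the Beta function $B(\beta,\alpha)=\frac{\Gamma(\beta)\Gamma(\alpha)}{\Gamma(\alpha+\beta)}$ yields the stated formula.

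Next, for the $\psi$-Caputo derivative of a power, I would first record that the operator $\frac{1}{\psi'(t)}\frac{d}{dt}$ sends $(\psi(t)-\psi(a))^{\gamma}$ to $\gamma\,(\psi(t)-\psi(a))^{\gamma-1}$ by the chain rule, since the factors of $\psi'(t)$ cancel. Iterating $n$ times with $\gamma=\beta-1$ gives $\sigma_{\psi}^{[n]}(t)=\frac{\Gamma(\beta)}{\Gamma(\beta-n)}(\psi(t)-\psi(a))^{\beta-n-1}$. Inserting this into the second representation of the $\psi$-Caputo derivative in Definition 2.3, namely ${}^{C}D_{a^{+}}^{\alpha;\psi}\sigma=I_{a^{+}}^{n-\alpha;\psi}\sigma_{\psi}^{[n]}$, and applying the integral identity just proved (now with exponent $\beta-n$ in place of $\beta$ and order $n-\alpha$ in place of $\alpha$) collapses the two Gamma factors at $\beta-n$ and produces $\frac{\Gamma(\beta)}{\Gamma(\beta-\alpha)}(\psi(t)-\psi(a))^{\beta-\alpha-1}$, as claimed.

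Finally, the vanishing identity follows from the same computation of the iterated operator: for an integer $k$ with $0\le k\le n-1$, applying $\frac{1}{\psi'(t)}\frac{d}{dt}$ to $(\psi(t)-\psi(a))^{k}$ lowers the exponent by one each time, so after $k+1\le n$ applications the result is identically zero, whence $\sigma_{\psi}^{[n]}\equiv 0$ and the $\psi$-Caputo derivative vanishes. I expect the only delicate point to be the change of variables in the first identity, in particular checking that the substitution is well defined and that the resulting integrand is genuinely the Beta kernel on $[0,1]$; once that is in place, the remaining two parts reduce to the elementary differentiation rule for powers together with a single application of the integral formula.
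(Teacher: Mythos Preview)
Your argument is correct: the Beta-integral substitution $u=(\psi(s)-\psi(a))/(\psi(t)-\psi(a))$ is the standard way to obtain the first identity, and the chain-rule computation $\tfrac{1}{\psi'(t)}\tfrac{d}{dt}(\psi(t)-\psi(a))^{\gamma}=\gamma(\psi(t)-\psi(a))^{\gamma-1}$ then reduces the second and third identities to the first and to an elementary vanishing, respectively. The only caveat worth flagging is that your derivation of the second identity invokes the first with exponent $\beta-n$ in place of $\beta$, so strictly speaking you need $\beta>n$ for the Beta integral to converge; this restriction is implicit in the lemma as stated and is the usual range in which the formula is applied.

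As for comparison with the paper: the paper does not supply its own proof of this lemma. It is quoted from the references \cite{Almeida,Kilbas} and used as a black box in the subsequent arguments (notably in verifying that the integral representation in Lemma~3.1 satisfies the differential equation). Your write-up therefore goes beyond what the paper itself contains; it is the classical proof one finds in the cited sources, and nothing needs to be changed.
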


\begin{lemma}(Leray-Schauder alternative \cite{Leray})\label{fixed}
Let $\mathcal{T}: E\to E$ be a completely continuous operator (i.e., a map that restricted to any bounded
set in E is compact). Let $\mathcal{S}(\mathcal{T}) = \{ x\in E:x=\nu\mathcal{T}(x),~\text{for some}~ 0 < \nu< 1 \}$. Then either the set $\mathcal{S}(\mathcal{T})$ is unbounded or $\mathcal{T}$ has at least one fixed point.
\end{lemma}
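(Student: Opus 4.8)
The plan is to establish the dichotomy in contrapositive form: assuming that $\mathcal{S}(\mathcal{T})$ is bounded, I would produce a fixed point of $\mathcal{T}$. Here I take $E$ to be a Banach space, which is the setting in which the alternative will be applied. The engine of the argument is Schauder's fixed point theorem, which guarantees a fixed point for a continuous self-map of a nonempty closed bounded convex set whose image is relatively compact; the task is to manufacture such a self-map out of $\mathcal{T}$ and then to show that its fixed point is in fact a fixed point of $\mathcal{T}$ itself.

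First I would use the boundedness hypothesis to fix a radius $R>0$ with $\|x\|<R$ for every $x\in\mathcal{S}(\mathcal{T})$, and consider the closed ball $\overline{B}_R=\{x\in E:\|x\|\le R\}$. Next I would introduce the radial retraction $\rho:E\to\overline{B}_R$ given by $\rho(x)=x$ when $\|x\|\le R$ and $\rho(x)=Rx/\|x\|$ when $\|x\|>R$, which is continuous on all of $E$. The composition $\rho\circ\mathcal{T}$ then maps $\overline{B}_R$ into itself; since $\overline{B}_R$ is bounded and $\mathcal{T}$ is completely continuous, $\mathcal{T}(\overline{B}_R)$ is relatively compact, and applying the continuous map $\rho$ preserves this, so $\rho\circ\mathcal{T}$ is a compact continuous self-map of the nonempty closed bounded convex set $\overline{B}_R$. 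Schauder's theorem then yields a point $x^{\ast}\in\overline{B}_R$ with $\rho(\mathcal{T}x^{\ast})=x^{\ast}$.

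The decisive step is to rule out the possibility that $\mathcal{T}x^{\ast}$ lies outside the ball. Suppose $\|\mathcal{T}x^{\ast}\|>R$. Then by the definition of $\rho$ one has $x^{\ast}=\rho(\mathcal{T}x^{\ast})=\nu\,\mathcal{T}x^{\ast}$ with $\nu=R/\|\mathcal{T}x^{\ast}\|\in(0,1)$, so that $x^{\ast}\in\mathcal{S}(\mathcal{T})$ and hence $\|x^{\ast}\|<R$; but simultaneously $\|x^{\ast}\|=\|\rho(\mathcal{T}x^{\ast})\|=R$, a contradiction. Therefore $\|\mathcal{T}x^{\ast}\|\le R$, which forces $\rho(\mathcal{T}x^{\ast})=\mathcal{T}x^{\ast}$, and combined with $\rho(\mathcal{T}x^{\ast})=x^{\ast}$ this gives $\mathcal{T}x^{\ast}=x^{\ast}$, the desired fixed point.

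I expect the main obstacle to be the careful justification of the compactness input to Schauder's theorem, specifically checking that $\rho\circ\mathcal{T}$ genuinely has relatively compact image, which relies on the complete continuity of $\mathcal{T}$ on the bounded ball together with the continuity (not merely nonexpansiveness, which can fail in a general Banach space) of the radial retraction. The boundary-exclusion argument is then short, but it must be stated with the strict inequality $\|x\|<R$ on $\mathcal{S}(\mathcal{T})$ in order to reach the contradiction. Since the whole scheme reduces the statement to Schauder's theorem, which in turn rests on the Brouwer fixed point theorem, I would note that no topological machinery beyond this chain is required.
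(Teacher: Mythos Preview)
Your argument is correct and is essentially the standard reduction of the Leray--Schauder alternative to Schauder's fixed point theorem via the radial retraction onto a closed ball. The contrapositive setup, the choice of $R$ with strict inequality on $\mathcal{S}(\mathcal{T})$, the compactness of $\rho\circ\mathcal{T}$, and the boundary-exclusion contradiction are all carried out properly.

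However, the paper does not actually prove this lemma: it is stated with a citation to Granas and Dugundji, \emph{Fixed Point Theory}, and used as a black-box tool in the proof of Theorem~\ref{th:existence}. So there is no ``paper's own proof'' to compare against. Your sketch is precisely the classical argument one finds in that reference (or in most textbook treatments), and nothing further is needed.
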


\section{Main Results}
Let $C([0,1],\mathbb{R})$ be the space of all continuous functions defined on $[0,1]$. Let $X=\{x(t):x(t)\in C([0,1],\mathbb{R})\}$ be a Banach space endowed with the norm $\|x\|_{X}=\sup_{t\in[0,1]}|x(t)|$ and $Y=\{y(t):y(t)\in C([0,1],\mathbb{R})\}$ be a Banach space endowed with the norm $\|y\|_{Y}=\sup_{t\in[0,1]}|y(t)|$. Thus the product space $(X\times Y,\|\cdot\|_{X\times Y})$ is also a Banach space with the norm $\|(x,y)\|_{X\times Y}=\|x\|_{X}+\|y\|_{Y}$ for $(x,y)\in X\times Y$.\\

\begin{lemma}
Let $h\in C([0,1],,\mathbb{R})$ be a given function and $1 <\alpha< 2$. Then the unique solution of
\begin{equation}\label{linear}
\begin{cases}
{}^{C}D_{0^{+}}^{\alpha;\psi}x(t)=h(t),~~t\in[0,1]\\
x(0)=0,~x(1)=\lambda x(\eta),
\end{cases}
\end{equation}
is given by the integral equation
\begin{eqnarray}\label{equiv}
x(t)&=&\frac{1}{\Gamma(\alpha)}\int_{0}^{t}\psi^{'}(s)(\psi(t)-\psi(s))^{\alpha-1}h(s)ds+\frac{1}{\Delta_1}\bigg[\frac{1}{\Gamma(\alpha)}\int_{0}^{1}\psi^{'}(s)(\psi(1)-\psi(s))^{\alpha-1}h(s)ds\nonumber\\
&&-\frac{\lambda}{\Gamma(\alpha)}\int_{0}^{\eta}\psi^{'}(s)(\psi(\eta)-\psi(s))^{\alpha-1}h(s)ds\bigg](\psi(t)-\psi(0)),
\end{eqnarray}
where $$\Delta_1=\lambda(\psi(\eta)-\psi(0))-(\psi(1)-\psi(0))\not=0.$$
\end{lemma}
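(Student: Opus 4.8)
The plan is to convert the fractional differential equation into an equivalent integral equation by applying the fractional integral operator, and then pin down the two integration constants using the four given boundary data. Since $1<\alpha<2$, the relevant integer is $n=2$, so Lemma \ref{lemID} tells us that applying $I_{0^+}^{\alpha;\psi}$ to $^{C}D_{0^{+}}^{\alpha;\psi}x(t)=h(t)$ yields
\begin{equation*}
x(t)=I_{0^{+}}^{\alpha;\psi}h(t)+c_{0}+c_{1}(\psi(t)-\psi(0)),
\end{equation*}
for arbitrary real constants $c_{0},c_{1}$ (with $c_{k}=x_{\psi}^{[k]}(0)/k!$). Here I would write $I_{0^{+}}^{\alpha;\psi}h(t)$ explicitly as $\tfrac{1}{\Gamma(\alpha)}\int_{0}^{t}\psi^{'}(s)(\psi(t)-\psi(s))^{\alpha-1}h(s)\,ds$ using Definition 2.1.

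Next I would impose the boundary conditions to determine $c_{0}$ and $c_{1}$. Evaluating at $t=0$, the integral term vanishes (empty domain of integration) and $\psi(0)-\psi(0)=0$, so $x(0)=c_{0}$; hence $x(0)=0$ forces $c_{0}=0$. For the second condition, evaluating the representation at $t=1$ and at $t=\eta$ and substituting into $x(1)=\lambda x(\eta)$ gives
\begin{equation*}
I_{0^{+}}^{\alpha;\psi}h(1)+c_{1}(\psi(1)-\psi(0))=\lambda\Big[I_{0^{+}}^{\alpha;\psi}h(\eta)+c_{1}(\psi(\eta)-\psi(0))\Big].
\end{equation*}
Collecting the $c_{1}$ terms, the coefficient is exactly $(\psi(1)-\psi(0))-\lambda(\psi(\eta)-\psi(0))=-\Delta_{1}$, which is nonzero by hypothesis. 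Solving for $c_{1}$ therefore yields
\begin{equation*}
c_{1}=\frac{1}{\Delta_{1}}\Big[I_{0^{+}}^{\alpha;\psi}h(1)-\lambda\,I_{0^{+}}^{\alpha;\psi}h(\eta)\Big].
\end{equation*}

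Substituting $c_{0}=0$ and this value of $c_{1}$ back into the representation for $x(t)$, and writing each $\psi$-Riemann--Liouville integral in its explicit integral form, reproduces the claimed formula (\ref{equiv}). I expect no serious obstacle here; the argument is essentially a direct integration followed by linear algebra in two unknowns, and the genuinely load-bearing assumption is $\Delta_{1}\neq 0$, which guarantees the $c_{1}$-equation is uniquely solvable and hence that the solution is unique. The only points demanding care are the correct sign bookkeeping in identifying the coefficient of $c_{1}$ with $-\Delta_{1}$, and noting that the constant $c_{0}$ is annihilated both by the boundary value at $0$ and by the factor $(\psi(t)-\psi(0))$ carried by the $c_{1}$ term. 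If a full verification were desired, one could also check the converse direction by applying $^{C}D_{0^{+}}^{\alpha;\psi}$ to (\ref{equiv}) via Lemma \ref{lemID} and Lemma \ref{lemprop}, confirming that the right-hand side indeed satisfies the equation and both boundary conditions.
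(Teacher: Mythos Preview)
Your proposal is correct and follows essentially the same approach as the paper: apply Lemma~\ref{lemID} to obtain the general solution with two constants, use $x(0)=0$ to kill $c_0$, use $x(1)=\lambda x(\eta)$ to solve for $c_1$ (with $\Delta_1\neq 0$ ensuring uniqueness), and then note the converse via Lemmas~\ref{lemID} and~\ref{lemprop}. One minor slip: you wrote ``the four given boundary data,'' but this lemma involves only the two conditions $x(0)=0$ and $x(1)=\lambda x(\eta)$.
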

\begin{proof}
First, let $x\in C([0,1])$ be a solution of (\ref{linear}). Then, by Lemma \ref{lemID}, we get
\begin{equation}\label{first}
x(t)=\frac{1}{\Gamma(\alpha)}\int_{0}^{t}\psi^{'}(s)(\psi(t)-\psi(s))^{\alpha-1}h(s)ds+c_0+c_1(\psi(t)-\psi(0)).
\end{equation}
Applying the boundary conditions $x(0)=0,~x(1)=\lambda x(\eta)$ implies that $c_0=0$ and 
\begin{multline*}
\frac{1}{\Gamma(\alpha)}\int_{0}^{1}\psi^{'}(s)(\psi(1)-\psi(s))^{\alpha-1}h(s)ds+c_1(\psi(1)-\psi(0))\\
=\frac{\lambda}{\Gamma(\alpha)}\int_{0}^{\eta}\psi^{'}(s)(\psi(\eta)-\psi(s))^{\alpha-1}h(s)ds
+\lambda c_1(\psi(\eta)-\psi(0)),
\end{multline*}
which implies that 
$$c_1=\frac{1}{\Delta_1}\bigg[\frac{1}{\Gamma(\alpha)}\int_{0}^{1}\psi^{'}(s)(\psi(1)-\psi(s))^{\alpha-1}h(s)ds-\frac{\lambda}{\Gamma(\alpha)}\int_{0}^{\eta}\psi^{'}(s)(\psi(\eta)-\psi(s))^{\alpha-1}h(s)ds\bigg].$$
which, on substituting in (\ref{first}), completes the solution (\ref{equiv}).\\
Conversely, If $x(t)$ satisfies the integral equation (\ref{equiv}), then by applying the $\psi$-Caputo fractional derivative ${}^{C}D_{0^{+}}^{\alpha;\psi}$ to both sides of equation (\ref{equiv}) and using Lemmas \ref{lemID} and \ref{lemprop}, we obtain 
$${}^{C}D_{0^{+}}^{\alpha;\psi}x(t)=h(t).$$
Finally, it remains to show that the boundary conditions in (\ref{linear}) are satisfied. Clearly, $x(0)=0$ and the direct computations lead to $x(1)=\lambda x(\eta)$. This completes the proof.
\end{proof}
Similarly, the general solution of ${}^{C}D_{0^{+}}^{\beta;\psi}y(t)=h(t),~y(0)=0,~y(1)=\mu y(\xi)$  can be obtained from 
\begin{eqnarray}\label{equiv2}
y(t)&=&\frac{1}{\Gamma(\beta)}\int_{0}^{t}\psi^{'}(s)(\psi(t)-\psi(s))^{\beta-1}h(s)ds+\frac{1}{\Delta_2}\bigg[\frac{1}{\Gamma(\beta)}\int_{0}^{1}\psi^{'}(s)(\psi(1)-\psi(s))^{\beta-1}h(s)ds\nonumber\\
&&-\frac{\mu}{\Gamma(\beta)}\int_{0}^{\xi}\psi^{'}(s)(\psi(\xi)-\psi(s))^{\beta-1}h(s)ds\bigg](\psi(t)-\psi(0)),
\end{eqnarray}
where $$\Delta_2=\mu(\psi(\xi)-\psi(0))-(\psi(1)-\psi(0))\not=0.$$
\begin{lemma}
Assume that $f,g:[0,1]\times\mathbb{R}^{2}\to\mathbb{R}$ are continuous functions. Then  $(x,y)\in X\times Y$ is a solution of (\ref{main}) if and only if $(x,y)\in X\times Y$ is a solution of the coupled system of integral equations
\begin{eqnarray*}
x(t)&=&\frac{1}{\Gamma(\alpha)}\int_{0}^{t}\psi^{'}(s)(\psi(t)-\psi(s))^{\alpha-1}f(s,x(s),y(s))ds\\
&+&\frac{1}{\Delta_1}\bigg[\frac{1}{\Gamma(\alpha)}\int_{0}^{1}\psi^{'}(s)(\psi(1)-\psi(s))^{\alpha-1}f(s,x(s),y(s))ds\\
&-&\frac{\lambda}{\Gamma(\alpha)}\int_{0}^{\eta}\psi^{'}(s)(\psi(\eta)-\psi(s))^{\alpha-1}f(s,x(s),y(s))ds\bigg](\psi(t)-\psi(0)),\\
y(t)&=&\frac{1}{\Gamma(\beta)}\int_{0}^{t}\psi^{'}(s)(\psi(t)-\psi(s))^{\beta-1}g(s,x(s),y(s))ds\\
&+&\frac{1}{\Delta_2}\bigg[\frac{1}{\Gamma(\beta)}\int_{0}^{1}\psi^{'}(s)(\psi(1)-\psi(s))^{\beta-1}g(s,x(s),y(s))ds\\
&-&\frac{\mu}{\Gamma(\beta)}\int_{0}^{\xi}\psi^{'}(s)(\psi(\xi)-\psi(s))^{\beta-1}g(s,x(s),y(s))ds\bigg](\psi(t)-\psi(0)).
\end{eqnarray*}
\end{lemma}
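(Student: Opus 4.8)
The plan is to reduce the coupled biconditional to the two decoupled characterizations already established. The key observation is that the two integral equations appearing in the statement are precisely \eqref{equiv} and \eqref{equiv2} with the free forcing term $h$ specialized to the nonlinear compositions $f(\cdot,x(\cdot),y(\cdot))$ and $g(\cdot,x(\cdot),y(\cdot))$; so the only real task is to check that these compositions are admissible inputs to the preceding lemma.

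First I would fix $(x,y)\in X\times Y$ and set $h_1(t):=f(t,x(t),y(t))$ and $h_2(t):=g(t,x(t),y(t))$ for $t\in[0,1]$. Since $x$ and $y$ are continuous on $[0,1]$ and $f,g:[0,1]\times\mathbb{R}^{2}\to\mathbb{R}$ are continuous by hypothesis, the map $t\mapsto(t,x(t),y(t))$ is continuous into $[0,1]\times\mathbb{R}^{2}$, and composing with $f$ and $g$ shows $h_1,h_2\in C([0,1],\mathbb{R})$. This continuity is exactly the standing hypothesis $h\in C([0,1],\mathbb{R})$ needed to invoke the preceding lemma.

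Next I would apply the preceding lemma to the first scalar problem ${}^{C}D_{0^{+}}^{\alpha;\psi}x(t)=h_1(t)$ with $x(0)=0,\ x(1)=\lambda x(\eta)$: it asserts that $x$ solves this problem if and only if $x$ is given by \eqref{equiv} with $h$ replaced by $h_1$, which is verbatim the first integral equation in the statement. Applying the analogous result recorded in \eqref{equiv2} to the second scalar problem ${}^{C}D_{0^{+}}^{\beta;\psi}y(t)=h_2(t)$ with $y(0)=0,\ y(1)=\mu y(\xi)$ yields the second integral equation. Since the two equations of \eqref{main}, together with the four boundary conditions, decouple into these two scalar problems once $h_1,h_2$ are frozen as above, the pair $(x,y)$ solves \eqref{main} if and only if both integral equations hold simultaneously.

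There is no substantial obstacle here: the statement is essentially a bookkeeping corollary of the two single-equation results, and the only point requiring care is the continuity of the compositions $h_1,h_2$, which is what guarantees that the earlier equivalences apply. I would close by remarking that the ``if and only if'' is inherited directly from the corresponding biconditional in the preceding lemma, so no separate converse argument is required.
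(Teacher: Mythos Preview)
Your proposal is correct and matches the paper's approach: the paper states this lemma without proof, treating it as an immediate corollary of the scalar equivalence \eqref{equiv} for $x$ (proved in the preceding lemma) together with its analogue \eqref{equiv2} for $y$. Your only additional observation---that the compositions $h_1=f(\cdot,x(\cdot),y(\cdot))$ and $h_2=g(\cdot,x(\cdot),y(\cdot))$ are continuous, so the preceding lemma applies---is exactly the bookkeeping the paper leaves implicit.
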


Let us define the operator $\mathcal{T}:X\times Y\to X\times Y$ as
\begin{equation} \label{op}
\mathcal{T}(x,y)(t)=
\begin{pmatrix}
\mathcal{T}_{1}(x,y)(t)\\
\mathcal{T}_{2}(x,y)(t)
\end{pmatrix}
\end{equation}
where
\begin{equation} \label{op1}
\begin{split}
\mathcal{T}_{1}(x,y)(t)&=\frac{1}{\Gamma(\alpha)}\int_{0}^{t}\psi^{'}(s)(\psi(t)-\psi(s))^{\alpha-1}f(s,x(s),y(s))ds\\
&+\frac{1}{\Delta_1}\bigg[\frac{1}{\Gamma(\alpha)}\int_{0}^{1}\psi^{'}(s)(\psi(1)-\psi(s))^{\alpha-1}f(s,x(s),y(s))ds\\
&-\frac{\lambda}{\Gamma(\alpha)}\int_{0}^{\eta}\psi^{'}(s)(\psi(\eta)-\psi(s))^{\alpha-1}f(s,x(s),y(s))ds\bigg](\psi(t)-\psi(0)),
\end{split}
\end{equation}
and
\begin{equation} \label{op2}
\begin{split}
\mathcal{T}_{2}(x,y)(t)&=\frac{1}{\Gamma(\beta)}\int_{0}^{t}\psi^{'}(s)(\psi(t)-\psi(s))^{\beta-1}g(s,x(s),y(s))ds\\
&+\frac{1}{\Delta_2}\bigg[\frac{1}{\Gamma(\beta)}\int_{0}^{1}\psi^{'}(s)(\psi(1)-\psi(s))^{\beta-1}g(s,x(s),y(s))ds\\
&-\frac{\mu}{\Gamma(\beta)}\int_{0}^{\xi}\psi^{'}(s)(\psi(\xi)-\psi(s))^{\beta-1}g(s,x(s),y(s))ds\bigg](\psi(t)-\psi(0)).
\end{split}
\end{equation}

In order to establish our main results, we introduce the following assumptions.
\begin{itemize}
  \item[](A1) The functions $~f,g:[0,1]\times\mathbb{R}^{2}\to\mathbb{R}$ are continuous and there exist real constants $L_1,L_2>0$ such that
  $$|f(t,x_1,y_1)-f(t,x_2,y_2)|\leq L_1(|x_1-x_2|+|y_1-y_2|),$$
  $$|g(t,x_1,y_1)-g(t,x_2,y_2)|\leq L_2(|x_1-x_2|+|y_1-y_2|),$$
$\forall t\in[0,1]~ \textnormal{and}~ x_i,y_i\in\mathbb{R},~i=1,2.$
  \item [](A2) There exist real constants $k_i,l_i\geq 0,~i=1,2$ and $k_0>0,l_0>0$ such that 
  $$|f(t,x,y)|\leq k_0+k_1|x|+k_2|y|,~~|g(t,x,y)|\leq l_0+l_1|x|+l_2|y|,$$
$\forall t\in[0,1]~ \textnormal{and}~ x_i,y_i\in\mathbb{R},~i=1,2.$
\end{itemize}
In the following, for brevity, we use the notations:
\begin{equation}\label{constants1}
 \left.\begin{aligned}
\gamma_1&=\frac{M_1}{\Gamma(\alpha+1)}\Bigg[(\psi(1)-\psi(0))^{\alpha}+\frac{(|\lambda|+1)}{|\Delta_1|}(\psi(1)-\psi(0))^{\alpha+1}\Bigg]\\
\gamma_2&=\frac{M_2}{\Gamma(\beta+1)}\Bigg[(\psi(1)-\psi(0))^{\beta}+\frac{(|\mu|+1)}{|\Delta_2|}(\psi(1)-\psi(0))^{\beta+1}\Bigg]\\
\gamma_3&=\frac{L_1}{\Gamma(\alpha+1)}\Bigg[(\psi(1)-\psi(0))^{\alpha}+\frac{(|\lambda|+1)}{|\Delta_1|}(\psi(1)-\psi(0))^{\alpha+1}\Bigg]\\
\gamma_4&=\frac{L_2}{\Gamma(\beta+1)}\Bigg[(\psi(1)-\psi(0))^{\beta}+\frac{(|\mu|+1)}{|\Delta_2|}(\psi(1)-\psi(0))^{\beta+1}\Bigg]
\end{aligned}\right\}
\end{equation}

\begin{equation}\label{constants2}
 \left.\begin{aligned}
\Omega_0&=\Bigg(\frac{1}{\Gamma(\alpha+1)}\bigg[(\psi(1)-\psi(0))^{\alpha}+\frac{(|\lambda|+1)}{|\Delta_1|}(\psi(1)-\psi(0))^{\alpha+1}\bigg]\\
&+\frac{1}{\Gamma(\beta+1)}\bigg[(\psi(1)-\psi(0))^{\beta}+\frac{(|\mu|+1)}{|\Delta_2|}(\psi(1)-\psi(0))^{\beta+1}\bigg]\Bigg)l_0\\
\Omega_1&=\frac{1}{\Gamma(\alpha+1)}\Bigg[(\psi(1)-\psi(0))^{\alpha}+\frac{(|\lambda|+1)}{|\Delta_1|}(\psi(1)-\psi(0))^{\alpha+1}\Bigg]k_1\\
&+\frac{1}{\Gamma(\beta+1)}\Bigg[(\psi(1)-\psi(0))^{\beta}+\frac{(|\mu|+1)}{|\Delta_2|}(\psi(1)-\psi(0))^{\beta+1}\Bigg]l_1\\
\Omega_2&=\frac{1}{\Gamma(\alpha+1)}\Bigg[(\psi(1)-\psi(0))^{\alpha}+\frac{(|\lambda|+1)}{|\Delta_1|}(\psi(1)-\psi(0))^{\alpha+1}\Bigg]k_2\\
&+\frac{1}{\Gamma(\beta+1)}\Bigg[(\psi(1)-\psi(0))^{\beta}+\frac{(|\mu|+1)}{|\Delta_2|}(\psi(1)-\psi(0))^{\beta+1}\Bigg]l_2\\
\Omega^{*}&=\max\{\Omega_1,\Omega_2\}
\end{aligned}\right\}
\end{equation}
\subsection{The uniqueness result via Banach's fixed point theorem}
\begin{theorem}\label{uniqueness}
Assume that (A1) hold. Then the coupled system (\ref{main}) has a unique solution on $[0,1]$ provided that
\begin{equation}\label{uniquecondition}
\left(\gamma_3+\gamma_4\right)<1,
\end{equation}
where $\gamma_3$ and $\gamma_4$ are given in (\ref{constants1}).
\end{theorem}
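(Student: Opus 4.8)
The plan is to realize solutions of \eqref{main} as fixed points of the operator $\mathcal{T}$ defined in \eqref{op}, and then to show that hypothesis \eqref{uniquecondition} forces $\mathcal{T}$ to be a contraction on the Banach space $(X\times Y,\|\cdot\|_{X\times Y})$, so that Banach's fixed point theorem yields a unique fixed point. By the equivalence lemma proved above, this fixed point is precisely the unique solution of \eqref{main} on $[0,1]$. Thus the whole argument reduces to a single contraction estimate, and no compactness or growth hypothesis is needed here.

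First I would fix two arbitrary points $(x_1,y_1),(x_2,y_2)\in X\times Y$ and estimate the two components of $\mathcal{T}$ separately. For $\mathcal{T}_1$, I take absolute values inside each of the three integrals in \eqref{op1}, apply the Lipschitz bound from (A1), namely $|f(s,x_1(s),y_1(s))-f(s,x_2(s),y_2(s))|\le L_1(\|x_1-x_2\|_X+\|y_1-y_2\|_Y)$, and pull the constant $L_1(\|x_1-x_2\|_X+\|y_1-y_2\|_Y)$ outside. The remaining integrals are then handled by the first identity in Lemma \ref{lemprop} (the case $\beta=1$), which gives $\frac{1}{\Gamma(\alpha)}\int_0^t\psi'(s)(\psi(t)-\psi(s))^{\alpha-1}\,ds=\frac{(\psi(t)-\psi(0))^{\alpha}}{\Gamma(\alpha+1)}$.

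The key estimation step uses the monotonicity of $\psi$: since $\psi$ is increasing and $0\le\eta,t\le1$, one has $\psi(t)-\psi(0)\le\psi(1)-\psi(0)$ and $(\psi(\eta)-\psi(0))^{\alpha}\le(\psi(1)-\psi(0))^{\alpha}$, so after bounding the extra factor $(\psi(t)-\psi(0))$ the two boundary terms together contribute $\frac{(|\lambda|+1)}{|\Delta_1|}(\psi(1)-\psi(0))^{\alpha+1}$. Collecting everything and taking the supremum over $t\in[0,1]$ gives exactly $\|\mathcal{T}_1(x_1,y_1)-\mathcal{T}_1(x_2,y_2)\|_X\le\gamma_3\bigl(\|x_1-x_2\|_X+\|y_1-y_2\|_Y\bigr)$ with $\gamma_3$ as in \eqref{constants1}. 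An identical computation for $\mathcal{T}_2$, using the Lipschitz constant $L_2$ for $g$ and the order $\beta$, yields $\|\mathcal{T}_2(x_1,y_1)-\mathcal{T}_2(x_2,y_2)\|_Y\le\gamma_4\bigl(\|x_1-x_2\|_X+\|y_1-y_2\|_Y\bigr)$.

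Finally I would add the two bounds. Since $\|(x,y)\|_{X\times Y}=\|x\|_X+\|y\|_Y$, this produces $\|\mathcal{T}(x_1,y_1)-\mathcal{T}(x_2,y_2)\|_{X\times Y}\le(\gamma_3+\gamma_4)\|(x_1,y_1)-(x_2,y_2)\|_{X\times Y}$. Condition \eqref{uniquecondition}, namely $\gamma_3+\gamma_4<1$, then makes $\mathcal{T}$ a contraction, and Banach's fixed point theorem delivers the unique fixed point, hence the unique solution. The only real obstacle is the bookkeeping: making sure each of the three integral terms in both components is estimated with the correct power of $\psi(1)-\psi(0)$ and that the factor $(|\lambda|+1)$ (respectively $(|\mu|+1)$) arises correctly from combining the $\lambda$-weighted boundary integral with the endpoint integral; the fractional-calculus input itself is merely the single power-rule identity from Lemma \ref{lemprop}.
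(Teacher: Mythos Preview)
Your argument is correct and follows the same route as the paper: estimate $\mathcal{T}_1$ and $\mathcal{T}_2$ via the Lipschitz bounds in (A1), evaluate the $\psi$-integrals by the power rule from Lemma~\ref{lemprop}, and conclude that $\mathcal{T}$ is a $(\gamma_3+\gamma_4)$-contraction on the Banach space $X\times Y$. The paper additionally proves a self-map estimate $\mathcal{T}\mathcal{B}_r\subset\mathcal{B}_r$ on a suitable ball before the contraction step, but this is redundant for Banach's theorem on the full space, so your streamlined version is entirely valid.
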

\begin{proof}
Assume that $r>0$ is a real number satisfying
$$r\geq\frac{\gamma_1+\gamma_2}{1-(\gamma_3+\gamma_4)},$$

 First we shall show that $\mathcal{T}\mathcal{B}_{r}\subset\mathcal{B}_{r}$, where $\mathcal{T}$ is given by (\ref{op}) and
$\mathcal{B}_{r}=\{(x,y)\in X\times Y:\|(x,y)\|_{X\times Y}\leq r\}$.\\

Set $\sup_{t\in[0,1]}|f(t,0,0)|=M_1<\infty$ and $\sup_{t\in[0,1]}|g(t,0,0)|=M_2<\infty$.
For $(x,y)\in\mathcal{B}_{r},~t\in[0,1]$, we have\\
$|\mathcal{T}_{1}(x,y)(t)|$
\begin{align*}
& \leq \Bigg|\frac{1}{\Gamma(\alpha)}\int_{0}^{t}\psi^{'}(s)(\psi(t)-\psi(s))^{\alpha-1}f(s,x(s),y(s))ds\Bigg|\\
&+\Bigg|\frac{1}{\Delta_1}\bigg[\frac{1}{\Gamma(\alpha)}\int_{0}^{1}\psi^{'}(s)(\psi(1)-\psi(s))^{\alpha-1}f(s,x(s),y(s))ds\\
&-\frac{\lambda}{\Gamma(\alpha)}\int_{0}^{\eta}\psi^{'}(s)(\psi(\eta)-\psi(s))^{\alpha-1}f(s,x(s),y(s))ds\bigg](\psi(t)-\psi(0))\Bigg|\\
& \leq \frac{1}{\Gamma(\alpha)}\int_{0}^{t}\psi^{'}(s)(\psi(t)-\psi(s))^{\alpha-1}(|f(s,x(s),y(s))-f(t,0,0)|+|f(t,0,0)|)ds\\
&+\frac{1}{|\Delta_1|}\bigg[\frac{1}{\Gamma(\alpha)}\int_{0}^{1}\psi^{'}(s)(\psi(1)-\psi(s))^{\alpha-1}(|f(s,x(s),y(s))-f(t,0,0)|+|f(t,0,0)|)ds\\
&+\frac{|\lambda|}{\Gamma(\alpha)}\int_{0}^{\eta}\psi^{'}(s)(\psi(\eta)-\psi(s))^{\alpha-1}(|f(s,x(s),y(s))-f(t,0,0)|+|f(t,0,0)|)ds\bigg]|\psi(t)-\psi(0)|\\
& \leq \frac{1}{\Gamma(\alpha)}\int_{0}^{t}\psi^{'}(s)(\psi(t)-\psi(s))^{\alpha-1}(L_1(|x(s)|+|y(s)|)+|f(t,0,0)|)ds\\
&+\frac{1}{|\Delta_1|}\bigg[\frac{1}{\Gamma(\alpha)}\int_{0}^{1}\psi^{'}(s)(\psi(1)-\psi(s))^{\alpha-1}(L_1(|x(s)|+|y(s)|)+|f(t,0,0)|)ds\\
&+\frac{|\lambda|}{\Gamma(\alpha)}\int_{0}^{\eta}\psi^{'}(s)(\psi(\eta)-\psi(s))^{\alpha-1}(L_1(|x(s)|+|y(s)|)+|f(t,0,0)|)ds\bigg]|\psi(t)-\psi(0)|\\
& \leq \frac{1}{\Gamma(\alpha)}\int_{0}^{t}\psi^{'}(s)(\psi(t)-\psi(s))^{\alpha-1}(L_1(\|x\|_X+\|y\|_Y)+M_1)ds\\
&+\frac{1}{|\Delta_1|}\bigg[\frac{1}{\Gamma(\alpha)}\int_{0}^{1}\psi^{'}(s)(\psi(1)-\psi(s))^{\alpha-1}(L_1(\|x\|_X+\|y\|_Y)+M_1)ds\\
&+\frac{|\lambda|}{\Gamma(\alpha)}\int_{0}^{\eta}\psi^{'}(s)(\psi(\eta)-\psi(s))^{\alpha-1}(L_1(\|x\|_X+\|y\|_Y)+M_1)ds\bigg](\psi(1)-\psi(0))\\
& \leq \frac{1}{\Gamma(\alpha+1)}\Bigg[(\psi(1)-\psi(0))^{\alpha}+\frac{(|\lambda|+1)}{|\Delta_1|}(\psi(1)-\psi(0))^{\alpha+1}\Bigg](L_1r+M_1),
\end{align*}
which implies that  
\begin{equation}\label{uniquT1}
\|\mathcal{T}_{1}(x,y)\|_{X}\leq \frac{1}{\Gamma(\alpha+1)}\Bigg[(\psi(1)-\psi(0))^{\alpha}+\frac{(|\lambda|+1)}{|\Delta_1|}(\psi(1)-\psi(0))^{\alpha+1}\Bigg](L_1r+M_1).
\end{equation}
Similarly, we can find that
\begin{equation}\label{uniquT2}
\|\mathcal{T}_{2}(x,y)\|_{Y}\leq \frac{1}{\Gamma(\beta+1)}\Bigg[(\psi(1)-\psi(0))^{\beta}+\frac{(|\mu|+1)}{|\Delta_2|}(\psi(1)-\psi(0))^{\beta+1}\Bigg](L_2r+M_2).
\end{equation}
Consequently, from (\ref{uniquT1}) and (\ref{uniquT2}), we get
\begin{align*}
\|\mathcal{T}(x,y)\|_{X\times Y}&\leq \frac{1}{\Gamma(\alpha+1)}\Bigg[(\psi(1)-\psi(0))^{\alpha}+\frac{(|\lambda|+1)}{|\Delta_1|}(\psi(1)-\psi(0))^{\alpha+1}\Bigg](L_1r+M_1)\\
&+\frac{1}{\Gamma(\beta+1)}\Bigg[(\psi(1)-\psi(0))^{\beta}+\frac{(|\mu|+1)}{|\Delta_2|}(\psi(1)-\psi(0))^{\beta+1}\Bigg](L_2r+M_2)\\
&=(\gamma_1+\gamma_2)+(\gamma_3+\gamma_4)r\\
&\leq r.
\end{align*}

Hence, $\mathcal{T}\mathcal{B}_{r}\subset\mathcal{B}_{r}$.\\

Now, for $(x_1, y_1), (x_2, y_2) \in X \times X$ and for any $t \in [0,1]$, we get\\

$|\mathcal{T}_{1}(x_1,y_1)(t)-\mathcal{T}_{1}(x_2,y_2)(t)|$
\begin{align*}
& \leq \frac{1}{\Gamma(\alpha)}\int_{0}^{t}\psi^{'}(s)(\psi(t)-\psi(s))^{\alpha-1}|f(s,x_1(s),y_1(s))-f(t,x_2(s),y_2(s))|ds\\
&+\frac{1}{|\Delta_1|}\bigg[\frac{1}{\Gamma(\alpha)}\int_{0}^{1}\psi^{'}(s)(\psi(1)-\psi(s))^{\alpha-1}|f(s,x_1(s),y_1(s))-f(t,x_2(s),y_2(s))|ds\\
&+\frac{|\lambda|}{\Gamma(\alpha)}\int_{0}^{\eta}\psi^{'}(s)(\psi(\eta)-\psi(s))^{\alpha-1}|f(s,x_1(s),y_1(s))-f(t,x_2(s),y_2(s))|ds\bigg]|\psi(t)-\psi(0)|\\
& \leq \frac{L_1}{\Gamma(\alpha)}\int_{0}^{t}\psi^{'}(s)(\psi(t)-\psi(s))^{\alpha-1}(|x_1(s)-x_2(s)|+|y_1(s)-y_2(s)|)ds\\
&+\frac{L_1}{|\Delta_1|}\bigg[\frac{1}{\Gamma(\alpha)}\int_{0}^{1}\psi^{'}(s)(\psi(1)-\psi(s))^{\alpha-1}(|x_1(s)-x_2(s)|+|y_1(s)-y_2(s)|)ds\\
&+\frac{|\lambda|}{\Gamma(\alpha)}\int_{0}^{\eta}\psi^{'}(s)(\psi(\eta)-\psi(s))^{\alpha-1}(|x_1(s)-x_2(s)|+|y_1(s)-y_2(s)|)ds\bigg]|\psi(t)-\psi(0)|\\
&\leq \gamma_3(\|x_1-x_2\|+\|y_1-y_2\|),
\end{align*}
which implies that
\begin{equation}\label{uniquT11}
\|\mathcal{T}_{1}(x_1,y_1)-\mathcal{T}_{1}(x_2,y_2)\|_{X}\leq \gamma_3(\|x_1-x_2\|+\|y_1-y_2\|).
\end{equation}
Similarly, we can find that
\begin{equation}\label{uniquT22}
\|\mathcal{T}_{2}(x_1,y_1)-\mathcal{T}_{2}(x_2,y_2)\|_{X}\leq \gamma_4(\|x_1-x_2\|+\|y_1-y_2\|).
\end{equation}
It follows from (\ref{uniquT11}) and (\ref{uniquT22}) that
$$\|\mathcal{T}(x_1,y_1)-\mathcal{T}(x_2,y_2)\|_{X\times Y}\leq (\gamma_3+\gamma_4)(\|x_1-x_2\|+\|y_1-y_2\|).$$
From the above inequality, we deduce that $\mathcal{T}$ is a contraction in view of the condition (\ref{uniquecondition}). Hence it follows by by Banach's fixed point theorem that there exists a unique fixed point for the operator $\mathcal{T}$, which corresponds
to a unique solution of problem (\ref{main}) on $[0,1]$. This completes the proof.
\end{proof}

\subsection{The existence result via Leray-Schauder alternative}

\begin{theorem}\label{th:existence}
Assume that (A2) hold. If $\Omega^{*}<1$, then the coupled system (\ref{main}) has at least one solution on $[0,1]$,
where $\Omega^{*}$ is given in (\ref{constants2}).
\end{theorem}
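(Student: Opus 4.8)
The plan is to apply the Leray-Schauder alternative (Lemma \ref{fixed}) to the operator $\mathcal{T}$ defined in (\ref{op}). This requires two ingredients: first, that $\mathcal{T}$ is completely continuous on $X\times Y$; and second, that the solution set $\mathcal{S}(\mathcal{T})=\{(x,y):(x,y)=\nu\mathcal{T}(x,y),~0<\nu<1\}$ is bounded. Once both are in place, the alternative forces $\mathcal{T}$ to possess a fixed point, which by the preceding lemma is precisely a solution of (\ref{main}).

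First I would establish complete continuity of $\mathcal{T}$ in three stages. Continuity of $\mathcal{T}_1$ and $\mathcal{T}_2$ follows from the continuity of $f$ and $g$ together with the continuity of the $\psi$-Riemann-Liouville integral on $C([0,1],\mathbb{R})$: if $(x_n,y_n)\to(x,y)$, the integrands converge uniformly and the integrals pass to the limit. Next, uniform boundedness on a ball $\mathcal{B}_{\rho}=\{(x,y):\|(x,y)\|_{X\times Y}\leq\rho\}$ is obtained exactly as in the proof of Theorem \ref{uniqueness}, now using the growth bound (A2) in place of the Lipschitz estimate: for $(x,y)\in\mathcal{B}_{\rho}$ one bounds $|f(s,x(s),y(s))|\leq k_0+k_1\|x\|_X+k_2\|y\|_Y$ and integrates to control $\|\mathcal{T}_1(x,y)\|_X$, and similarly for $\mathcal{T}_2$.

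The key remaining step is equicontinuity. For $0\leq\tau_1<\tau_2\leq 1$ I would estimate $|\mathcal{T}_1(x,y)(\tau_2)-\mathcal{T}_1(x,y)(\tau_1)|$ by splitting the principal fractional integral into the overlap $[0,\tau_1]$ and the tail $[\tau_1,\tau_2]$, and by controlling the boundary term through $|(\psi(\tau_2)-\psi(0))-(\psi(\tau_1)-\psi(0))|=|\psi(\tau_2)-\psi(\tau_1)|$. Since $\psi$ is continuous and the integrand is bounded on $\mathcal{B}_{\rho}$ by (A2), each piece tends to zero as $\tau_2\to\tau_1$ uniformly in $(x,y)\in\mathcal{B}_{\rho}$, and the same argument applies to $\mathcal{T}_2$. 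With uniform boundedness and equicontinuity in hand, the Arzel\`a--Ascoli theorem gives that $\mathcal{T}$ maps bounded sets to relatively compact sets, so $\mathcal{T}$ is completely continuous. I expect this equicontinuity estimate to be the main obstacle, since handling the kernel $(\psi(\tau_2)-\psi(s))^{\alpha-1}$ near $s=\tau_1$ requires the standard but delicate splitting argument.

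Finally I would prove boundedness of $\mathcal{S}(\mathcal{T})$. Suppose $(x,y)=\nu\mathcal{T}(x,y)$ for some $0<\nu<1$. Applying (A2) and the same integral estimates as above gives $\|x\|_X\leq\|\mathcal{T}_1(x,y)\|_X$ and $\|y\|_Y\leq\|\mathcal{T}_2(x,y)\|_Y$; adding the two bounds and collecting the coefficients of $\|x\|_X$ and $\|y\|_Y$ produces, by the definitions in (\ref{constants2}), the inequality $\|(x,y)\|_{X\times Y}\leq\Omega_0+\Omega_1\|x\|_X+\Omega_2\|y\|_Y\leq\Omega_0+\Omega^{*}\|(x,y)\|_{X\times Y}$. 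Since $\Omega^{*}<1$, this rearranges to $\|(x,y)\|_{X\times Y}\leq\Omega_0/(1-\Omega^{*})$, a bound independent of $\nu$. Hence $\mathcal{S}(\mathcal{T})$ is bounded, and the Leray-Schauder alternative guarantees that $\mathcal{T}$ has a fixed point, which completes the proof.
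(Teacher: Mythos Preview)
Your proposal is correct and follows essentially the same route as the paper: establish complete continuity of $\mathcal{T}$ via continuity, uniform boundedness, equicontinuity and Arzel\`a--Ascoli, then bound the set $\mathcal{S}(\mathcal{T})$ using (A2) to obtain $\|(x,y)\|_{X\times Y}\leq\Omega_0/(1-\Omega^{*})$ and conclude by the Leray--Schauder alternative. The only cosmetic difference is that the paper bounds $|f|,|g|$ on a bounded set by invoking continuity on a compact region (obtaining constants $N_1,N_2$), whereas you use the linear growth from (A2) directly; both choices work and lead to the same conclusion.
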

\begin{proof}
First we show that the operator $\mathcal{T}:X\times Y\to X\times Y$ is completely continuous. By the continuity of functions $f$ and $g$, the operator $\mathcal{T}$ is continuous.\\
Let $\mathcal{K}\in X\times Y$ be bounded. Then there exist constants $N_1>0,~N_2>0$ such that $|f(t,x(t),y(t))|\leq N_1$ and 
$|g(t,x(t),y(t))|\leq N_2$. Then for any $(x,y)\in\mathcal{K}$, we get
\begin{align*}
|\mathcal{T}_{1}(x,y)(t)|& \leq \frac{1}{\Gamma(\alpha)}\int_{0}^{t}\psi^{'}(s)(\psi(t)-\psi(s))^{\alpha-1}|f(s,x(s),y(s))|ds\\
&+\frac{1}{|\Delta_1|}\bigg[\frac{1}{\Gamma(\alpha)}\int_{0}^{1}\psi^{'}(s)(\psi(1)-\psi(s))^{\alpha-1}|f(s,x(s),y(s))|ds\\
&+\frac{|\lambda|}{\Gamma(\alpha)}\int_{0}^{\eta}\psi^{'}(s)(\psi(\eta)-\psi(s))^{\alpha-1}|f(s,x(s),y(s))|ds\bigg]|\psi(t)-\psi(0)|\\
& \leq \frac{N_1}{\Gamma(\alpha+1)}\Bigg[(\psi(1)-\psi(0))^{\alpha}+\frac{(|\lambda|+1)}{|\Delta_1|}(\psi(1)-\psi(0))^{\alpha+1}\Bigg],
\end{align*}
which implies that
\begin{equation}\label{b1}
\|\mathcal{T}_{1}(x,y)\|_{X}\leq \frac{N_1}{\Gamma(\alpha+1)}\Bigg[(\psi(1)-\psi(0))^{\alpha}+\frac{(|\lambda|+1)}{|\Delta_1|}(\psi(1)-\psi(0))^{\alpha+1}\Bigg].
\end{equation}
Similarly, we get
\begin{equation}\label{b2}
\|\mathcal{T}_{2}(x,y)\|_{Y}\leq \frac{N_2}{\Gamma(\beta+1)}\Bigg[(\psi(1)-\psi(0))^{\beta}+\frac{(|\mu|+1)}{|\Delta_2|}(\psi(1)-\psi(0))^{\beta+1}\Bigg].
\end{equation}
From (\ref{b1}) and (\ref{b2}), it follows that $\mathcal{T}$ is uniformly bounded.\\

Next, we shall show that the operator $\mathcal{T}$ is equicontinuous.\\

 Let $t_1,t_2\in[0,1]$ with $t_1<t_2$. Then we have\\

$|\mathcal{T}_{1}(x,y)(t_2)-\mathcal{T}_{1}(x,y)(t_1)|$
\begin{eqnarray*}
&\leq&\Bigg|\frac{1}{\Gamma(\alpha)}\int_{0}^{t_2}\psi^{'}(s)\big[(\psi(t_2)-\psi(s))^{\alpha-1}-(\psi(t_1)-\psi(s))^{\alpha-1}\big]f(s,x(s),y(s))ds\Bigg|\\
&+&\Bigg|\frac{1}{\Gamma(\alpha)}\int_{t_1}^{t_2}\psi^{'}(s)(\psi(t_1)-\psi(s))^{\alpha-1}f(s,x(s),y(s))ds\Bigg|\\
&+&\Bigg|\frac{1}{\Delta_1}\bigg[\frac{1}{\Gamma(\alpha)}\int_{0}^{1}\psi^{'}(s)(\psi(1)-\psi(s))^{\alpha-1}f(s,x(s),y(s))ds\\
&-&\frac{\lambda}{\Gamma(\alpha)}\int_{0}^{\eta}\psi^{'}(s)(\psi(\eta)-\psi(s))^{\alpha-1}f(s,x(s),y(s))ds\bigg](\psi(t_2)-\psi(t_1))\Bigg|\\
&\leq&\frac{N_1}{\Gamma(\alpha+1)}\big[(\psi(t_2)-\psi(0))^{\alpha}-(\psi(t_1)-\psi(0))^{\alpha}\big]\\
&+&\frac{N_1}{|\Delta_1|\Gamma(\alpha+1)}\big[(\psi(1)-\psi(0))^{\alpha}+|\lambda|(\psi(\eta)-\psi(0))^{\alpha}\big](\psi(t_2)-\psi(t_1)),
\end{eqnarray*}
which imply that $\|\mathcal{T}_{1}(x,y)-\mathcal{T}_{1}(x,y)\|\to 0$ independent of $( x, y )\in \mathcal{K}$ as $t_2-t_1\to 0$. Also, we get\\

$|\mathcal{T}_{2}(x,y)(t_2)-\mathcal{T}_{2}(x,y)(t_1)|$
\begin{eqnarray*}
&\leq&\frac{N_2}{\Gamma(\beta+1)}\big[(\psi(t_2)-\psi(0))^{\beta}-(\psi(t_1)-\psi(0))^{\beta}\big]\\
&+&\frac{N_2}{|\Delta_2|\Gamma(\beta+1)}\big[(\psi(1)-\psi(0))^{\beta}+|\mu|(\psi(\xi)-\psi(0))^{\beta}\big](\psi(t_2)-\psi(t_1)),
\end{eqnarray*}
which imply that $\|\mathcal{T}_{2}(x,y)-\mathcal{T}_{2}(x,y)\|\to 0$ independent of $( x, y )\in \mathcal{K}$ as $t_2-t_1\to 0$.\\
 Therefore, the operator $\mathcal{T}$ is equicontinuous. Consequently, by Arzel\`{a}-Ascoli's theorem, we deduce that the operator $\mathcal{T}$ is completely continuous.\\

Finally, we shall show that the set 
$$\mathcal{S} = \{ (x,y)\in X\times Y: (x,y)=\nu\mathcal{T}(x,y),~ 0 < \nu< 1 \}$$ is bounded.\\
Let $(x,y)\in \mathcal{S}$,  then $(x,y)=\nu\mathcal{T}(x,y)$. For any $t\in[0,1]$, we have
$$x(t)=\nu\mathcal{T}_{1}(x,y)(t),~~y(t)=\nu\mathcal{T}_{2}(x,y)(t).$$
Then we have
\begin{eqnarray*}
|x(t)|=|\nu\mathcal{T}_{1}(x,y)(t)|&\leq&\left|\mathcal{T}_{1}(x,y)(t)\right|\\
&\leq&\frac{k_0+k_1|x|+k_2|y|}{\Gamma(\alpha+1)}\Bigg[(\psi(1)-\psi(0))^{\alpha}+\frac{(|\lambda|+1)}{|\Delta_1|}(\psi(1)-\psi(0))^{\alpha+1}\Bigg],
\end{eqnarray*}
and
\begin{eqnarray*}
|y(t)|=|\nu\mathcal{T}_{2}(x,y)(t)|&\leq&\left|\mathcal{T}_{2}(x,y)(t)\right|\\
&\leq&\frac{l_0+l_1|x|+l_2|y|}{\Gamma(\beta+1)}\Bigg[(\psi(1)-\psi(0))^{\beta}+\frac{(|\mu|+1)}{|\Delta_2|}(\psi(1)-\psi(0))^{\beta+1}\Bigg].
\end{eqnarray*}
Hence, we get
\begin{eqnarray*}
\|x\|_{X}&\leq&\frac{1}{\Gamma(\alpha+1)}\Bigg[(\psi(1)-\psi(0))^{\alpha}+\frac{(|\lambda|+1)}{|\Delta_1|}(\psi(1)-\psi(0))^{\alpha+1}\Bigg](k_0+k_1\|x\|_{X}+k_2\|y\|_{Y}),\\
\|y\|_{Y}&\leq&\frac{1}{\Gamma(\beta+1)}\Bigg[(\psi(1)-\psi(0))^{\beta}+\frac{(|\mu|+1)}{|\Delta_2|}(\psi(1)-\psi(0))^{\beta+1}\Bigg](l_0+l_1\|x\|_{X}+l_2\|y\|_{Y}),
\end{eqnarray*}
which imply that
$$\|x\|_{X}+\|y\|_{Y}\leq\Omega_0+\max\{\Omega_1,\Omega_2\}\|x+y\|_{X\times Y}=\Omega_0+\Omega^{*}\|x+y\|_{X\times Y}),$$
where $\Omega_0,\Omega_1,\Omega_2$ and $\Omega^{*}$ are given in (\ref{constants2}).
Consequently, we get
\begin{equation}\label{existcond}
\|(x,y)\|_{X\times Y}\leq\frac{\Omega_0}{1-\Omega^{*}},
\end{equation}
which proves that the set $\mathcal{S}$ is bounded. Therefore, by Lemma \ref{fixed}, the operator $\mathcal{T}$ has at least one fixed point. Hence the coupled system (\ref{main}) has at least one solution on $[0,1]$. The proof is completed. 
\end{proof}

\section{Examples}
\begin{example}
Consider the following coupled system of $\psi$-Caputo fractional differential equations: 
\begin{equation}\label{ex1}
\begin{cases}
{}^{C}D_{0^{+}}^{\frac{3}{2};\psi}x(t)=\frac{e^{-3t}}{75+t}\left(\sin x(t)+|y(t)|\right)+\frac{e^{-t}}{1+t^2},~~t\in[0,1],\\
{}^{C}D_{0^{+}}^{\frac{4}{3};\psi}y(t)=\frac{1}{2t^2+100}\left(\frac{|x(t)|}{1+|x(t)|}+\sin y(t)\right)+\sin t+1,\\
x(0)=y(0)=0,\\
x(1)= x(\frac{1}{2}),~y(1)= y(\frac{1}{3}).
\end{cases}
\end{equation}
\end{example}
Here, $\alpha=\frac{3}{2},\beta=\frac{4}{3}, \eta=\frac{1}{2}, \xi=\frac{1}{3}, \lambda=\mu=1$, $f(t,x,y)=\frac{e^{-3t}}{75+t}\left(\sin x+|y|\right)+\frac{e^{-t}}{1+t^2}$ and $g(t,x,y)=\frac{1}{2t^2+100}\left(\frac{|x|}{1+|x|}+\sin y\right)+\sin t+1$.\\

Obviously, on can find that:
$$|f(t,x_1,y_1)-f(t,x_2,y_2)|\leq\frac{1}{75}(|x_1-x_2|+|y_1-y_2|),$$
$$|g(t,u_1,v_1)-g(t,u_2,v_2)|\leq\frac{1}{100}(|x_1-x_2|+|y_1-y_2|),$$
from which, we get $L_{1}=\frac{1}{75}$ and $L_{2}=\frac{1}{100}$.\\

Let us take $\psi(t)=3t^2$. Clearly, $\psi$ is an increasing function on $[0,1]$ and $\psi^{'}(t) = 6t$ is a continuous function
on $[0, 1]$.\\

Using the given data, the condition (\ref{uniquecondition}) becomes
$$\gamma_3+\gamma_4=0.1910978713+0.3633970871=0.5544949584<1.$$

Thus, all the assumptions of Theorem \ref{uniqueness} are satisfied. Hence it follows that the coupled system (\ref{ex1}) has a unique solution for  on $[0,1]$.

\begin{example}
Consider the following coupled system of $\psi$-Caputo fractional differential equations: 
\begin{equation}\label{ex2}
\begin{cases}
{}^{C}D_{0^{+}}^{\frac{3}{2};\psi}x(t)=\frac{1}{\sqrt{625+t}}\cos t+\frac{e^{-t}}{200}\sin x(t)+\frac{1}{300}\frac{y(t)|x(t)|}{1+|x(t)|},~~t\in[0,1],\\
{}^{C}D_{0^{+}}^{\frac{4}{3};\psi}y(t)=\frac{e^{-2t}}{2\sqrt{1600+t}}+\frac{1}{270}\sin x(t)+\frac{1}{3(60+t)}\sin (y(t)),\\
x(0)=y(0)=0,\\
x(1)= x(\frac{1}{2}),~y(1)= y(\frac{1}{3}).
\end{cases}
\end{equation}
\end{example}
Obviously, we get
$$|f(t,x,y)|\leq\frac{1}{25}+\frac{1}{200}\|x\|_X+\frac{1}{300}\|y\|_Y,$$
$$|g(t,x,y)|\leq\frac{1}{80}+\frac{1}{270}\|x\|_X+\frac{1}{180}\|y\|_Y.$$
Thus $k_0=\frac{1}{25},~k_1=\frac{1}{200},~k_2=\frac{1}{300},~l_0=\frac{1}{80},~l_1=\frac{1}{270},~l_2=\frac{1}{180}$.\\

Using (\ref{constants2}), we find that
$$\Omega^{*}=\max\{\Omega_1,\Omega_2\}=\max\{0.2062532154,0.5020208267\}=0.5020208267<1.$$

Therefore, the assumptions of Theorem \ref{th:existence} are satisfied. Hence, the coupled system (\ref{ex2}) has at least one solution on $[0,1]$.

\end{document}